\newtheorem{theorem}{Theorem}[section]
\newtheorem{problem}[theorem]{Problem}
\theoremstyle{definition}
\newtheorem{definition}[theorem]{Definition}
\theoremstyle{remark}
\newtheorem{remark}[theorem]{Remark}
\def\N{{\mathbf{N}}}
\def\mju{\mathcal{U}}
\def\f2{\mathbb{F}_2}
\newcommand{\ep}{\varepsilon}
\begin{document}

\title{\LARGE There is no finitely isometric Krivine's theorem}

\author{James~Kilbane and Mikhail~I.~Ostrovskii}

\date{\today}
\maketitle

\begin{large}

\noindent{\bf Abstract.} We prove that for every $p\in(1,\infty)$,
$p\ne 2$, there exist a Banach space $X$ isomorphic to $\ell_p$
and a finite subset $U$ in $\ell_p$, such that $U$ is not
isometric to a subset of $X$. This result shows that the finite
isometric version of the Krivine theorem (which would be a
strengthening of the Krivine theorem (1976)) does not hold.
\medskip

{\small \noindent{\bf Keywords:} isometric embedding, isomorphism
of Banach spaces, Krivine theorem, Orlicz space, modular space
\medskip

\noindent{\bf 2010 Mathematics Subject Classification.} Primary:
46B03; Secondary: 30L05, 46B07, 46B85}

\section{Introduction}

One of the most fundamental results on the structure of the
general infinite-dimensio\-nal Banach spaces is the following
theorem of Dvoretzky.

\begin{theorem}[Dvoretzky \cite{Dvo61}]\label{T:Dvo} For each infinite-di\-men\-sional Banach space $X$, each
$n\in\mathbb{N}$, and each $\ep>0$, there is an $n$-dimensional
subspace $X_n\subset X$ and an isomorphism $T:X_n\to\ell_2^n$ such
that $||T||\cdot||T^{-1}||\le 1+\ep$.
\end{theorem}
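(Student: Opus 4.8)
\emph{Proof sketch.} The plan is to prove this theorem via V.~Milman's concentration-of-measure argument, which localizes the problem to a finite-dimensional statement. The first step is the reduction: since $X$ is infinite-dimensional it contains, for every $N\in\mathbb{N}$, an $N$-dimensional subspace $Y$, so it suffices to establish the finite form of the theorem --- for all $n$ and $\ep>0$ there is $N=N(n,\ep)$ such that every $N$-dimensional normed space $(Y,\|\cdot\|)$ has an $n$-dimensional subspace $E$ on whose Euclidean unit sphere the function $\|\cdot\|$ is constant up to a factor $1+\ep$. It will emerge that one may take $N(n,\ep)=\exp(\const\cdot n/\ep^2)$, which matches the (optimal) logarithmic behaviour realized by $\ell_\infty^N$, so no substantially smaller $N$ can work in general.

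Second, fix on $Y$ the Euclidean structure whose unit ball is the maximal-volume ellipsoid inscribed in $B_Y$ (John's ellipsoid); normalizing so that this ellipsoid is $B_2^N$, one has $\|x\|\le|x|_2$ for all $x$, hence $f(x):=\|x\|$ is a Lipschitz function on $S^{N-1}$ (in the geodesic metric) with constant $b:=\max_{|x|_2=1}\|x\|\le 1$. Let $M:=\int_{S^{N-1}}\|x\|\,d\sigma(x)$. The crucial quantitative input is a lower bound of the form $M\ge c\,b\,\sqrt{(\log N)/N}$, which comes from the Dvoretzky--Rogers lemma together with John's decomposition of the identity at the contact points of the ellipsoid; it is exactly this estimate that keeps the target dimension from collapsing and is responsible for the logarithm.

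Third, apply L\'evy's isoperimetric inequality on the sphere: since $f$ is $b$-Lipschitz, $\sigma\{x:|f(x)-M|>t\}\le C\exp(-cNt^2/b^2)$. Taking $t=\delta M$ and inserting $M\ge c\,b\sqrt{(\log N)/N}$ makes this of size at most $N^{-c'\delta^2}$. Now run the standard Grassmannian net argument: choose a random $n$-dimensional subspace $E$ with respect to the invariant measure, fix a $\delta$-net $\mathcal{N}$ of $S^{N-1}\cap E$ of cardinality at most $(C/\delta)^n$, and apply the union bound; the event ``$|\,\|x\|-M\,|\le\delta M$ for every $x\in\mathcal{N}$'' has positive probability once $(C/\delta)^n N^{-c'\delta^2}<1$, i.e.\ once $N\ge\exp(\const\cdot n/\delta^2)$. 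A routine successive-approximation argument promotes control on the net to the bound $(1-O(\delta))M\le\|x\|\le(1+O(\delta))M$ on all of $S^{N-1}\cap E$, so that the identity map $T$ from $(E,\|\cdot\|)$ onto $\ell_2^n$, rescaled by $M$, satisfies $\|T\|\cdot\|T^{-1}\|\le 1+O(\delta)$; choosing $\delta$ a small multiple of $\ep$ completes the finite form, and with it the theorem.

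The main obstacle is the lower estimate $M\ge c\,b\sqrt{(\log N)/N}$ of the second step. Concentration of measure alone only says that $\|\cdot\|$ is essentially constant along a random direction, which is vacuous without knowing that $M$ is not too small relative to the Lipschitz constant $b$; the content is precisely that John's position forces the unit ball to be sufficiently ``balanced'' for this to hold, and establishing it is where the convex geometry (John's theorem and the Dvoretzky--Rogers lemma) does the real work. The remaining ingredients --- spherical concentration, the cardinality of $\delta$-nets, the union bound over the net, and the passage from the net to the whole sphere --- are standard and essentially mechanical.
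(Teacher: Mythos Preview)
Your sketch is the standard Milman concentration-of-measure proof, and as a sketch it is correct: John's position, the Dvoretzky--Rogers lower bound on the mean, L\'evy's inequality, and the net/union-bound argument are exactly the ingredients, and the quantitative dependence you state is the right one.

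The paper, however, does not prove Dvoretzky's theorem this way at all. The only argument it gives for Theorem~\ref{T:Dvo} is the implication ``Theorem~\ref{T:FinDvo} $\Rightarrow$ Theorem~\ref{T:Dvo}'': assuming the Finite Dvoretzky Theorem, one builds an ultrafilter on the directed set of pairs (finite subset of $\ell_2$, tolerance $\ep$), and the low-distortion embeddings of these finite sets assemble into an isometric embedding of $\ell_2$ into the ultrapower $X^{\mathcal U}$, from which the linear statement follows. This is a soft, nonquantitative reduction whose purpose in the paper is expository --- to show that the finite metric version is formally as strong as the linear version --- rather than to give a self-contained proof.

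So you have taken a genuinely different route. Your argument is a complete, direct, quantitative proof yielding the bound $N(n,\ep)\asymp\exp(cn/\ep^2)$; the paper's argument is a one-line ultrapower compactness trick that trades all quantitative content for brevity and that, strictly speaking, still presupposes Theorem~\ref{T:FinDvo} (which the paper does not prove independently).
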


It is well known that $(1+\ep)$ cannot be replaced by $1$ in this
theorem. This follows, for example, from the fact that the unit
ball of any finite-dimensional subspace in $c_0$ is a polytope.
The fact that $\ell_p$ does not contain all of $\ell_2^n$
isometrically, unless $p$ is an even integer, was proved in
\cite{DJP98}.
\medskip

Recently embeddability of finite metric spaces into Banach spaces
became a very important direction in the Banach space theory. One
of the main reasons for this is the discovery that such embeddings
have important algorithmic applications, see \cite{LLR95,AR98}.
Low-distortion embedding of finite metric spaces into Banach
spaces became a very powerful toolkit for designing efficient
algorithms, the interested reader can find more information in surveys such as \cite{Ind01, IM04, Lin02,
Mat02, Nao10, Ost13, WS11}.\medskip

In this connection it is worthwhile to observe that Theorem
\ref{T:Dvo} can be derived from the following seemingly weaker
theorem. Our terminology
follows \cite{Ost13}.

\begin{theorem}[Finite Dvoretzky Theorem]\label{T:FinDvo}
For each infinite-di\-men\-sional Banach space $X$, each finite
subset $F\subset \ell_2$, and each $\ep>0$, there is a bilipschitz
embedding of $F$ into $X$ with distortion at most $(1+\ep)$.
\end{theorem}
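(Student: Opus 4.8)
The plan is to deduce Theorem~\ref{T:FinDvo} directly from the classical Dvoretzky Theorem (Theorem~\ref{T:Dvo}); this is the easy half of the equivalence alluded to in the remark preceding the statement, and it is what the word ``seemingly'' refers to. The first step is a dimension reduction. A finite set $F\subset\ell_2$ spans a finite-dimensional subspace $E\subseteq\ell_2$, say with $n=\dim E$, and every $n$-dimensional subspace of a Hilbert space is linearly isometric to $\ell_2^n$ (send an orthonormal basis of $E$ to the standard basis of $\ell_2^n$). Since this isometry preserves the metric on $F$, we may assume without loss of generality that $F\subset\ell_2^n$ for some $n\in\mathbb{N}$ depending only on $F$ (indeed $n\le|F|$).

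Next, apply Theorem~\ref{T:Dvo} with this $n$ and with the given $\ep$: there is an $n$-dimensional subspace $X_n\subset X$ and an isomorphism $T\colon X_n\to\ell_2^n$ with $\|T\|\cdot\|T^{-1}\|\le 1+\ep$. Define $f\colon F\to X_n\subset X$ by $f(x)=T^{-1}(x)$. Since $f$ is the restriction of the linear map $T^{-1}$, for all $x,y\in F$ we have $f(x)-f(y)=T^{-1}(x-y)$, and therefore
\[
\|f(x)-f(y)\|_X\le\|T^{-1}\|\cdot\|x-y\|_2,\qquad \|x-y\|_2=\big\|T\big(f(x)-f(y)\big)\big\|_2\le\|T\|\cdot\|f(x)-f(y)\|_X.
\]
The lower estimate forces $f$ to be injective, and the two estimates together show that the distortion of $f$ is at most $\|T\|\cdot\|T^{-1}\|\le 1+\ep$, which is exactly what is required. (If one wants $f$ itself to be non-expansive, rescale $T$ so that $\|T^{-1}\|=1$ and $\|T\|\le 1+\ep$.)

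There is essentially no genuine obstacle here. Once $F$ has been placed inside a single $\ell_2^n$, the linearity of the Dvoretzky isomorphism handles all pairs of points of $F$ at once, and the passage from an almost-isometric \emph{linear} isomorphism to a small-distortion \emph{bilipschitz} embedding of an arbitrary subset is automatic. The one point that deserves attention is the first step: it is precisely the finiteness of $F$ (rather than, say, $F$ being an arbitrary bounded or separable subset of $\ell_2$) that lets us pass to a finite-dimensional $\ell_2^n$ and thereby invoke the finite-dimensional Theorem~\ref{T:Dvo}.
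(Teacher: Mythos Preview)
Your argument is correct: reducing $F$ to a subset of some $\ell_2^n$ and then restricting the inverse of the Dvoretzky isomorphism $T$ yields a bilipschitz embedding of $F$ into $X$ with distortion at most $\|T\|\cdot\|T^{-1}\|\le 1+\ep$. This is indeed the ``easy'' direction (Theorem~\ref{T:Dvo}) $\Rightarrow$ (Theorem~\ref{T:FinDvo}), and the paper implicitly takes it for granted by calling Theorem~\ref{T:FinDvo} ``seemingly weaker.''

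Note, however, that the proof displayed in the paper immediately after the statement of Theorem~\ref{T:FinDvo} is \emph{not} a proof of Theorem~\ref{T:FinDvo} itself; it is labeled ``Proof of (Theorem~\ref{T:FinDvo}) $\Rightarrow$ (Theorem~\ref{T:Dvo})'' and establishes the reverse, nontrivial implication via an ultrapower argument (building an ultrafilter on the directed set of pairs $(j,\ep)$ and assembling the low-distortion maps $T_{(j,\ep)}$ into an isometric embedding of $\ell_2$ into $X^{\mathcal U}$). So you and the paper are doing complementary things: you supply the easy half that the paper omits, while the paper supplies the harder converse. Your remark that you are proving ``the easy half of the equivalence'' shows you are aware of this; just be sure not to present your argument as a substitute for the paper's ultrapower proof, since they have different targets.
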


\begin{proof}[Proof of {\rm (Theorem
\ref{T:FinDvo})$\Rightarrow$(Theorem \ref{T:Dvo})}] We are going
to use ultrapowers of Banach spaces (see \cite[Section
2.2]{Ost13}). We need to show that if the conclusion of Theorem
\ref{T:FinDvo} holds for a Banach space $X$, then there exists an
ultrapower of $X$ containing an isometric copy of $\ell_2$. This
can be done as follows (this is a slightly modified version of
\cite[Proposition 2.33]{Ost13}).
\medskip

Denote by $J$ the set of all finite subsets of $\ell_2$ containing
$0$. Consider the set $I=J\times(0,1)$ as an ordered set:
$(j_1,\ep_1)\succeq(j_2,\ep_2)$ if and only if $j_1\supseteq j_2$
and $\ep_1\le \ep_2$. Consider an ultrafilter $\mju$ on $I$
containing the filter generated by sets of the form $\{(j,\ep):~
(j,\ep)\succeq(j_0,\ep_0)\}$, where $j_0\in J$, $\ep_0\in(0,1)$.
\medskip

The conclusion of Theorem \ref{T:FinDvo} implies that  for each
pair $(j,\ep)\in J\times(0,1)$ there is a map $T_{(j,\ep)}:j\to X$
with distortion $\le 1+\ep$ satisfying $T(0)=0$.

It remains to observe that the maps
\[z\mapsto \begin{cases} T_{(j,\ep)}(z) &~\hbox{ if } z\in j\\
0 &~\hbox{ if } z\notin j
\end{cases}\]
(parameterized by pairs $(j,\ep)\in I$)  induce an isometric
embedding of $\ell_2$ into $X^\mju$.
\end{proof}

An important difference between Theorem \ref{T:Dvo} and Theorem
\ref{T:FinDvo} is that there are no known examples showing the
necessity of the $+\,\ep$ in Theorem \ref{T:FinDvo}. The examples
above showing the necessity of the $+\,\ep$ in Theorem \ref{T:Dvo} do
not serve as examples in the finite case. In fact, Fr\'echet
\cite{Fre10} (see also \cite[Proposition~1.17]{Ost13}) proved that
each $n$-element set embeds isometrically into $\ell_\infty^n$,
and thus, into $c_0$. Ball \cite{Bal90} proved that each $n$-element
subset of $L_p$ embeds isometrically into $\ell_p^{\binom{n}{2}}$. Since, as is well known \cite[p.~16]{JL01}
$\ell_2$ embeds isometrically into $L_p[0,1]$ for every $p$, it
follows that for $X=\ell_p$ the statement of  finite Dvoretzky
theorem remains true if we replace $(1+\ep)$ by $1$.

In this connection, the second-named author asked whether the
result which can be called ``finite isometric Dvoretzky theorem''
is true for all infinite-dimensional Banach spaces $X$, that is,

\begin{problem}[\cite{Ost15a}] Does there exist a finite subset $F$ of $\ell_2$
and an infinite-dimensional Banach space $X$ such that $F$ does
not admit an isometric embedding into $X$?
\end{problem}

This problem remains open. In this paper we show that the result
which could be called ``finite isometric Krivine theorem'' does
not hold for any $p\in[1,\infty]$, $p\ne 2$. More precisely, we
answer in the negative, for every $p\in(1,\infty)$, $p\ne 2$, the
following problem suggested in \cite{Ost15b}:

\begin{problem}[\cite{Ost15b}]\label{P:FinIsomKriv} Let $Y$ be a Banach space isomorphic to $\ell_p$, $1<p<\infty$.
Is it true that any finite subset of $\ell_p$ is isometric to some finite subset of $Y$?
\end{problem}

To justify the term ``finite isometric Krivine theorem'' let us
recall the following landmark result of Krivine.

\begin{theorem}[Krivine \cite{Kri76}] For each $p\in[1,\infty]$, each Banach space $X$ isomorphic to $\ell_p$, each
$n\in\mathbb{N}$, and each $\ep>0$, there is an $n$-dimensional
subspace $X_n\subset X$ and an isomorphism $T:X_n\to\ell_p^n$ such
that $||T||\cdot||T^{-1}||\le 1+\ep$.
\end{theorem}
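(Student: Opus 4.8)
This is the classical theorem of Krivine \cite{Kri76}; I sketch the route one would follow. Assume first that $1\le p<\infty$ (the case $p=\infty$ is analogous, with $c_0$ in the role of $\ell_p$). Since $X$ is isomorphic to $\ell_p$ it contains a normalized basic sequence $(e_i)$ which, for some $C<\infty$, is $C$-equivalent to the unit vector basis of $\ell_p$; then every normalized block basic sequence of $(e_i)$ is again $C$-equivalent to that basis. Hence it suffices to produce, for each $n$ and each $\ep>0$, a normalized block basic sequence $(y_1,\dots,y_n)$ of $(e_i)$ that is $(1+\ep)$-equivalent to the unit vector basis of $\ell_p^n$, and then take $X_n=\mathrm{span}\{y_1,\dots,y_n\}$ with $T$ the coordinate identification. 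After replacing $(e_i)$ by a subsequence I would pass to its spreading model $(\tilde e_i)$: a normalized $1$-subsymmetric basic sequence whose norm on finitely supported scalars is invariant under order-preserving changes of the index set, still $C$-equivalent to the unit vector basis of $\ell_p$, and with the property that every block basic sequence of $(\tilde e_i)$ can, up to an arbitrarily small error, be reproduced by a block basic sequence of the original $(e_i)$, hence inside $X$. The problem thereby reduces to a statement purely about the $1$-subsymmetric sequence $(\tilde e_i)$.

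The heart of the argument is Krivine's combinatorial-analytic construction applied to $(\tilde e_i)$. Exploiting the subsymmetry, one studies the growth of $k\mapsto\|\tilde e_1+\dots+\tilde e_k\|$ together with two-variable quantities of the form $\left\|s\sum_{i=1}^{k}\tilde e_i+t\sum_{i=k+1}^{2k}\tilde e_i\right\|$; a sub/super-multiplicativity property singles out an exponent $q\in[1,\infty]$, and a stabilization step---carried out by a Ramsey-type argument, or equivalently by iterated weak$^{*}$-compactness over the finitely many coefficient patterns that arise---yields, for each $n$ and each $\ep>0$, consecutive finite sets of indices $A_1<\dots<A_n$ such that the vectors $y_j=\sum_{i\in A_j}\tilde e_i$ span a subspace $(1+\ep)$-isomorphic to $\ell_q^n$. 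It remains to identify $q$: the configuration just produced is $C$-isomorphic to a subspace of $\ell_p$, whereas $\ell_q^n$ is not $C$-isomorphic to $\ell_p^n$ once $q\ne p$ and $n$ is large enough; hence $q=p$, and the theorem follows.

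The step I expect to be the real obstacle is this stabilization, in which one must extract the exponent $q$ and the almost-isometric blocks simultaneously; this is the genuinely difficult core of \cite{Kri76} (for which simpler proofs were found later, notably by Lemberg). Everything else---the reduction to block basic sequences, the passage to a spreading model, and the identification $q=p$ forced by the ambient $\ell_p$ structure---is routine.
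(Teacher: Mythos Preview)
The paper does not prove this theorem: it is merely quoted as the classical result of Krivine \cite{Kri76}, with no argument supplied. There is therefore nothing in the paper to compare your sketch against. Your outline is a reasonable high-level summary of the standard route to Krivine's theorem (reduction to block sequences, passage to a spreading model, the combinatorial stabilization that extracts the exponent, and the identification $q=p$), and you correctly flag the stabilization step as the nontrivial core; but as a self-contained proof it remains a sketch, since that core step is exactly what you do not carry out.
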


The cases $p=1$ and $p=\infty$ were not included in Problem
\ref{P:FinIsomKriv} because Bill~Johnson had already described
examples in these cases in his answer to \cite{Kil15}. The
examples are the following: both $\ell_1$ and $\ell_\infty$ are
isomorphic to strictly convex spaces. On the other hand, both
$\ell_1$ and $\ell_\infty$ contain quadruples of points $a,b,c,d$
such that $b\ne c$ and both $b$ and $c$ are metric midpoints
between $a$ and $d$. It is easy to see that such quadruples do not
exist in strictly convex Banach spaces.

It is worth mentioning that although we prove that the answer to
Problem \ref{P:FinIsomKriv} is negative, there exist ``many''
subsets of the unit sphere of $\ell_p$ for which the result is
positive, see the paper \cite{Kil17+} of the first-named author
for precise statement.\medskip

Our main result is the following (we denote by $\{e_i\}$ the unit
vector basis of $\ell_p$):

\begin{theorem}\label{T:NoFinIsKriv} {\rm (a)} For each $1<p<2$ there exist a Banach
space $X$ isomorphic to $\ell_p$ such that the set
$U:=\{e_1,e_2,-e_1,-e_2,0\}$, considered as a subset of $\ell_p$,
does not embed isometrically into $X$.
\medskip

\noindent{\rm (b)}  For each $2<p<\infty$ there exist a Banach
space $X$ isomorphic to $\ell_p$ such that the set
$V:=\{\pm2^{-1/p}(e_1+e_2),\pm2^{-1/p}(e_1-e_2),0\}$, considered
as a subset of $\ell_p$, does not embed isometrically into $X$.

\end{theorem}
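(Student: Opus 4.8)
The plan is to produce, for each $p\in(1,\infty)$ with $p\ne2$, a single renorming of $\ell_p$ that settles both (a) and (b). Let $1/p+1/q=1$, fix a countable dense subset $\{c^{(m)}\}_{m\ge1}$ of the unit sphere $S_{\ell_q}$, and fix positive scalars $\delta_m$ with $\sum_m\delta_m^p<\infty$. Define
\[
\|x\|_X:=\Big(\|x\|_p^p+\sum_{m\ge1}\delta_m^p\,|\langle c^{(m)},x\rangle|^p\Big)^{1/p},\qquad X:=(\ell_p,\|\cdot\|_X).
\]
First I would verify the elementary properties of $X$. Since $|\langle c^{(m)},x\rangle|\le\|x\|_p$, the series converges and $\|x\|_p\le\|x\|_X\le\big(1+\sum_m\delta_m^p\big)^{1/p}\|x\|_p$; as an $\ell_p$-sum of the norm $\|\cdot\|_p$ and the seminorms $x\mapsto\delta_m|\langle c^{(m)},x\rangle|$, $\|\cdot\|_X$ is a norm. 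Hence $X$ is isomorphic to $\ell_p$ via the identity map. Finally, the function $\|\cdot\|_X^p$ is strictly convex (the summand $\sum_i|x_i|^p$ is strictly convex for $p>1$ and the rest are convex and nonnegative), so $X$ is strictly convex: if $\|x\|_X=\|y\|_X=1$ with $x\ne y$ then $\|(x+y)/2\|_X^p<1$.

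The second step is the standard metric reduction. If $f$ is an isometric embedding of $U$ into $X$, translate so that $f(0)=0$; then $f(\pm e_i)$ are unit vectors with $\|f(e_i)-f(-e_i)\|_X=2$, so $f(0)=0$ is a metric midpoint of $f(e_i)$ and $f(-e_i)$, whence by strict convexity $f(0)=\tfrac12(f(e_i)+f(-e_i))$, i.e. $f(-e_i)=-f(e_i)$. Using $\|e_1\pm e_2\|_p=2^{1/p}$, the vectors $u:=f(e_1)$, $v:=f(e_2)$ are unit vectors of $X$ with $\|u+v\|_X=\|u-v\|_X=2^{1/p}$. The same reduction applied to $V$ produces unit vectors $u,v\in X$ with $\|u+v\|_X=\|u-v\|_X=2^{1-1/p}$. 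It remains to rule out such pairs.

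This is the heart of the argument, and it is exactly here that $p\ne2$ enters: the numbers $2^{1/p}$ (for $p<2$) and $2^{1-1/p}$ (for $p>2$) are the extremal constants of Clarkson's inequalities, while $X$ was built by adjoining ``Clarkson terms''. For $1<p<2$, the sharp scalar inequality $|a+b|^p+|a-b|^p\le2(|a|^p+|b|^p)$ holds with equality iff $ab=0$; applying it coordinatewise and to each pair $(\langle c^{(m)},u\rangle,\langle c^{(m)},v\rangle)$, summing, and using $(2^{1/p})^p=2$, a hypothetical pair $u,v$ would satisfy
\[
4=\|u+v\|_X^p+\|u-v\|_X^p\le2\big(\|u\|_X^p+\|v\|_X^p\big)=4,
\]
so equality holds everywhere: $u,v$ are disjointly supported and $\langle c^{(m)},u\rangle\langle c^{(m)},v\rangle=0$ for every $m$. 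For $2<p<\infty$, the sharp inequality is $|a+b|^p+|a-b|^p\le2^{p-1}(|a|^p+|b|^p)$, with equality iff $|a|=|b|$; since $(2^{1-1/p})^p=2^{p-1}$, the same analysis forces $|u_i|=|v_i|$ for all $i$ — hence $v_i=\ep_i u_i$ with $\ep_i\in\{\pm1\}$ on $\supp u$ — and $|\langle c^{(m)},u\rangle|=|\langle c^{(m)},v\rangle|$ for all $m$. Letting $w^A,w^B$ be the restrictions of $u$ to $\{\ep_i=1\}$ and $\{\ep_i=-1\}$ (both nonzero, since $v\ne\pm u$ because $0<2^{1-1/p}<2$), and using $u=w^A+w^B$, $v=w^A-w^B$, the last conditions read $\langle c^{(m)},w^A\rangle\langle c^{(m)},w^B\rangle=0$ for every $m$. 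In either case we have reached two disjointly supported nonzero vectors $z,z'\in\ell_p$ (namely $u,v$, resp.\ $w^A,w^B$) such that each $c^{(m)}$ annihilates at least one of them.

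To conclude I would use density. For nonzero $z,z'\in\ell_p$, the functionals $c\mapsto\langle c,z\rangle$ and $c\mapsto\langle c,z'\rangle$ on $\ell_q$ are nonzero, so their kernels are proper closed subspaces whose union is not all of $\ell_q$; hence $S_{\ell_q}$ has a nonempty relatively open subset on which neither functional vanishes, and this subset meets the dense set $\{c^{(m)}\}$. The corresponding $c^{(m)}$ annihilates neither $z$ nor $z'$, contradicting the previous paragraph. Thus, when $1<p<2$ this $X$ contains no isometric copy of $U$, and when $2<p<\infty$ it contains no isometric copy of $V$ — which is Theorem~\ref{T:NoFinIsKriv}. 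The places I expect to need care are getting the equality cases of the two scalar Clarkson inequalities exactly right and checking that $w^A,w^B$ are nonzero; note also that the scheme automatically produces nothing when $p=2$, since there Clarkson's inequality is the parallelogram identity and holds with equality for all vectors.
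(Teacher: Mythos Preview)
Your proof is correct and takes a genuinely different route from the paper's. The paper builds $X$ as an Orlicz space $\ell_M$ with $M(t)=t^p+t^r$, $p<r<2$, in case (a), and as a modular space $\ell_{\{M_i\}}$ with $M_i(t)=t^p+t^{p_i}$, $2<p_i\nearrow p$, in case (b); in each case the contradiction comes from comparing the Clarkson inequality (applied termwise to the $\|\cdot\|_p$-- and $\|\cdot\|_r$--parts, resp.\ $\|\cdot\|_{p_i}$--parts) with the defining equation of the Luxemburg norm at level $2^{1/p}$ or $2^{1-1/p}$, and the mismatch of exponents ($r\ne p$, $p_i\ne p$) forces a strict inequality. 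Your construction keeps a single exponent $p$ throughout but adjoins dense dual functionals, and you extract the contradiction not from an exponent mismatch but from the \emph{equality cases} of the scalar Clarkson inequalities together with a density/Baire argument on $S_{\ell_q}$. What you gain is a single, elementary renorming that handles both ranges of $p$ without any Orlicz or modular machinery; what the paper gains is that it never needs to identify the equality cases (which you correctly flag as the spot requiring care), and its spaces are ``natural'' sequence spaces rather than renormings. Both proofs ultimately rest on the fact that the values $2^{1/p}$ and $2^{1-1/p}$ are the sharp constants in Clarkson's inequalities, and that equality there is rigid when $p\ne 2$.
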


\begin{remark} The coefficient $2^{-1/p}$ in the statement of (b)
is needed to make the vectors normalized (this will be
convenient), of course the result holds without this coefficient.
\end{remark}

The main technical tools we will use in the proof of Theorem
\ref{T:NoFinIsKriv} are the Clarkson inequalities. In the
following theorem, if $q \in (1,\infty)$ we set $q^\prime$ to be
the so-called conjugate index of $q$, defined by $\frac1q +
\frac1{q^\prime} = 1$. We recall the following (see \cite[Theorem
9.7.2]{Gar07} for generalized Clarkson inequalities):
\begin{theorem}[{\cite[Theorem 2]{Cla36}}]\label{T:GenClark}
Suppose that $x,y \in \ell_p$, where $1 < p < \infty$, and
$r=\min(p,p^\prime)$. Then,
\begin{itemize}
    \item[{\rm(1)}] $2(\|x\|_p^{r^\prime} + \|y\|_p^{r^\prime}) \leq \|x+y\|_p^{r^\prime} + \|x-y\|_p^{r^\prime} \leq 2^{r^\prime - 1} (\|x\|_p^{r^\prime} + \|y\|_p^{r^\prime})$
    \item[{\rm(2)}]  $2^{r-1} (\|x\|_p^r + \|y\|_p^r) \leq \|x+y\|_p^r + \|x-y\|_p^r \leq 2(\|x\|_p^r + \|y\|_p^r)$.
\end{itemize}
\end{theorem}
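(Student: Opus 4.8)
The plan is to reduce everything to one-dimensional inequalities and then lift them to $\ell_p$ using the generalized Minkowski (mixed-norm) inequality. First I would record the scalar inequalities: for real or complex $a,b$ and $s\ge 2$,
\[
2(|a|^s+|b|^s)\le |a+b|^s+|a-b|^s\le 2^{s-1}(|a|^s+|b|^s),
\]
with both inequalities reversed for $1<s\le 2$. These follow from elementary convexity: writing $|a\pm b|^s=(|a\pm b|^2)^{s/2}$, invoking the parallelogram identity $|a+b|^2+|a-b|^2=2(|a|^2+|b|^2)$, and using that $t\mapsto t^{s/2}$ is convex for $s\ge 2$ and concave for $s\le 2$ together with the (super/sub)additivity of $t\mapsto t^{s/2}$ on the nonnegative axis. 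I would also record the one-dimensional Clarkson inequality $\big(|a+b|^{p'}+|a-b|^{p'}\big)^{p/p'}\le 2^{p-1}(|a|^p+|b|^p)$ for $1<p<2$, which is the genuinely one-variable statement and is checked by calculus after the normalization $b=1$.

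Second, the two \emph{matched} inequalities -- those whose outer exponent equals $p$, namely (1) when $p\ge 2$ and (2) when $p\le 2$ -- are immediate: since $\|z\|_p^p=\sum_i|z_i|^p$, I would apply the scalar inequality with $s=p$ coordinatewise to $a=x_i$, $b=y_i$ and sum over $i$, which delivers both the upper and the lower bound in these cases with the stated constants.

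Third -- and this is the crux -- I must treat the two \emph{mismatched} inequalities, whose outer exponent is $p'\ne p$. I would first handle (1) for $1<p<2$ (outer exponent $p'>2$). Viewing the sign $\pm$ as a two-point space, set $h(i,\pm)=x_i\pm y_i$, so that $\|x+y\|_p^{p'}+\|x-y\|_p^{p'}$ is the $p'$-th power of the mixed norm $\big\|\,\|h\|_{\ell_p(i)}\,\big\|_{\ell_{p'}(\pm)}$. Because $p\le 2\le p'$, the generalized Minkowski inequality applies in the favorable direction and lets me interchange the two norms, bounding this above by $\big\|\,\|h\|_{\ell_{p'}(\pm)}\,\big\|_{\ell_p(i)}$; applying the one-dimensional Clarkson inequality coordinatewise and summing then yields the classical (nonhomogeneous) bound $\|x+y\|_p^{p'}+\|x-y\|_p^{p'}\le 2\,(\|x\|_p^p+\|y\|_p^p)^{p'/p}$. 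A short power-mean computation, in which the exponent identity $(p'/p)-1=p'-2$ makes the constants match exactly, upgrades this to the homogeneous bound $2^{p'-1}(\|x\|_p^{p'}+\|y\|_p^{p'})$ asserted in (1). The remaining mismatched case, (2) for $p>2$, I would obtain by duality rather than repeat the argument: the operator $T(x,y)=(x+y,x-y)$ is self-adjoint for the natural pairing, so its norm on $\ell_p\oplus\ell_p$ with the $\ell_{p'}$-sum norm equals its norm on the dual space $\ell_{p'}\oplus\ell_{p'}$ with the $\ell_p$-sum norm; feeding in the inequality just proved for the exponent $p'<2$ transfers it, with constant exactly $2$, to the exponent $p$.

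Finally, each mismatched lower bound follows from the corresponding upper bound by the substitution $x\mapsto\tfrac{x+y}{2}$, $y\mapsto\tfrac{x-y}{2}$, which interchanges the roles of $\{\|x\|_p,\|y\|_p\}$ and $\{\tfrac12\|x+y\|_p,\tfrac12\|x-y\|_p\}$ and turns an upper bound with constant $M$ into a lower bound with constant $2^{s}/M$. The main obstacle is exactly the mismatched case: everything hinges on having $\min(p,p')\le 2\le\max(p,p')$ so that the Minkowski interchange runs in the correct direction, and on checking that the one-dimensional Clarkson inequality followed by the power-mean step reproduces the sharp constants rather than lossy ones.
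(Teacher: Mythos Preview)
The paper does not prove Theorem~\ref{T:GenClark} at all: it simply quotes the statement from \cite[Theorem~2]{Cla36} (with a pointer to \cite[Theorem~9.7.2]{Gar07} for generalizations) and then uses it as a black box in Sections~2 and~3. So there is no ``paper's own proof'' to compare your proposal against.

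That said, your outline is a recognizable and essentially correct route to Clarkson's inequalities. The matched cases (outer exponent equal to $p$) genuinely reduce to the coordinatewise scalar inequality $2(|a|^s+|b|^s)\le |a+b|^s+|a-b|^s\le 2^{s-1}(|a|^s+|b|^s)$ with $s$ on the appropriate side of $2$; your derivation via the parallelogram identity and convexity/concavity of $t\mapsto t^{s/2}$ works, though the upper bound for $s\ge 2$ actually needs the power-mean comparison $(\tfrac{u^s+v^s}{2})^{1/s}\le(\tfrac{u^2+v^2}{2})^{1/2}$ together with $(\tfrac{u^2+v^2}{2})^{1/2}\le(\tfrac{u^s+v^s}{2})^{1/s}\cdot 2^{0}$ in reverse for the pair $(|a|,|b|)$, not literally ``superadditivity''. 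For the mismatched cases your plan---scalar Clarkson plus the Minkowski interchange of an $\ell_p(i)$ and an $\ell_{p'}(\pm)$ norm, then a power-mean step where the identity $p'/p-1=p'-2$ makes the constants sharp, followed by the duality transfer and the involution $x\mapsto\tfrac{x+y}{2}$, $y\mapsto\tfrac{x-y}{2}$ for the lower bounds---is exactly the classical argument one finds in standard references. Just be aware that the scalar inequality $|a+b|^{p'}+|a-b|^{p'}\le 2(|a|^p+|b|^p)^{p'/p}$ for $1<p<2$ is itself the nontrivial one-variable step (your ``checked by calculus after normalization $b=1$'' is accurate but hides the only real work in the whole proof).
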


\begin{remark} The following remark is for the reader who knows the definition of the \emph{James constant} of a Banach space, which is defined as $J(X) = \sup \{ \min ( \|x+y\|, \|x-y\| ) : x,y \in S_X\}$. One can unify parts (a) and (b) of Theorem \ref{T:NoFinIsKriv} in terms of the following:
\end{remark}
\begin{theorem}\label{T:Jam}
For each $p \neq 2$ there is a Banach space $X$ such that $X$ is isomorphic to $\ell_p$, $J(X) = J(\ell_p)$, but the supremum in the definition of the James constant is not attained.
\end{theorem}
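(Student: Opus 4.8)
The plan is to argue that no new construction is needed: one of the spaces $X$ already produced in Theorem~\ref{T:NoFinIsKriv} has $J(X)=J(\ell_p)$ with the supremum unattained. Put $r=\min(p,p')$; then $J(\ell_p)=2^{1/r}$. Indeed, for $x,y\in S_{\ell_p}$ the upper estimate in part~(2) of Theorem~\ref{T:GenClark} gives $\|x+y\|_p^r+\|x-y\|_p^r\le 2(\|x\|_p^r+\|y\|_p^r)=4$, so $\min(\|x+y\|_p,\|x-y\|_p)\le 2^{1/r}$, and equality is attained at $x=e_1,\ y=e_2$ when $1<p<2$ (so $r=p$) and at $x=2^{-1/p}(e_1+e_2),\ y=2^{-1/p}(e_1-e_2)$ when $2<p<\infty$ (so $r=p'$). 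The observation that will do the real work afterwards is a rigidity template: if $x_0,y_0$ are unit vectors of \emph{any} Banach space with $x_0\ne\pm y_0$ and $\|x_0+y_0\|=\|x_0-y_0\|=2^{1/r}$, then the five distinct points $0,\pm x_0,\pm y_0$ have all $\binom{5}{2}$ mutual distances equal to $1$, $2$, or $2^{1/r}$, arranged exactly as in $U$ if $1<p<2$ and as in $V$ if $2<p<\infty$; consequently $\{0,\pm x_0,\pm y_0\}$ is isometric to $U$ (respectively $V$), via $e_1\mapsto x_0,\ e_2\mapsto y_0$ (respectively $2^{-1/p}(e_1+e_2)\mapsto x_0,\ 2^{-1/p}(e_1-e_2)\mapsto y_0$).

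\medskip

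First I would take $X$ to be the space of Theorem~\ref{T:NoFinIsKriv}(a) when $1<p<2$ and of Theorem~\ref{T:NoFinIsKriv}(b) when $2<p<\infty$. The point is that this $X$ is built (as an Orlicz/modular space) so that it still satisfies the upper estimate in part~(2) of Theorem~\ref{T:GenClark} with the \emph{same} exponent $r$ and the \emph{same} constant $2$ as $\ell_p$ --- something to be recorded explicitly in the proof of Theorem~\ref{T:NoFinIsKriv}. Granting it, repeating the one-line computation of the first paragraph inside $X$ gives $J(X)\le 2^{1/r}=J(\ell_p)$. For the reverse inequality I would use Krivine's theorem: since $X$ is isomorphic to $\ell_p$, for every $\ep>0$ there is a linear embedding $T\colon\ell_p^2\to X$ with $\|z\|_p\le\|Tz\|\le(1+\ep)\|z\|_p$ for all $z$. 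Applying $T$ to the extremal pair $u,v$ of the first paragraph and renormalizing the images onto $S_X$ produces $x,y\in S_X$ with $\|x\pm y\|\ge\|T(u\pm v)\|-2\ep\ge 2^{1/r}-2\ep$, so $J(X)\ge 2^{1/r}-2\ep$; letting $\ep\downarrow 0$ yields $J(X)=J(\ell_p)=2^{1/r}$.

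\medskip

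Next I would prove non-attainment by contradiction. Suppose the supremum defining $J(X)$ is attained at $x_0,y_0\in S_X$. Then $x_0\ne\pm y_0$ (otherwise one of $\|x_0\pm y_0\|$ vanishes and the minimum is $0<2^{1/r}$), and $\min(\|x_0+y_0\|,\|x_0-y_0\|)=2^{1/r}$ forces both $\|x_0+y_0\|\ge 2^{1/r}$ and $\|x_0-y_0\|\ge 2^{1/r}$. Inserting $x_0,y_0$ into the Clarkson estimate valid in $X$ gives $\|x_0+y_0\|^r+\|x_0-y_0\|^r\le 4$; since each summand is at least $2$, both equal $2$, i.e.\ $\|x_0+y_0\|=\|x_0-y_0\|=2^{1/r}$. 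By the rigidity template, $\{0,\pm x_0,\pm y_0\}\subset X$ is then isometric to $U$ (if $1<p<2$) or to $V$ (if $2<p<\infty$), contradicting Theorem~\ref{T:NoFinIsKriv}; hence the supremum is not attained. For the remaining values $p\in\{1,\infty\}$ I would instead take for $X$ a strictly convex space isomorphic to $\ell_p$ (the examples attributed to Bill~Johnson in the introduction): Krivine's theorem as above gives $J(X)\ge J(\ell_p^2)=J(\ell_p)=2$, while $J(X)\le 2$ trivially, and an attained supremum would force unit vectors $x_0,y_0$ with $\|x_0+y_0\|=2$, so that $\tfrac12(x_0+y_0)\in S_X$ is the midpoint of $x_0,y_0\in S_X$; strict convexity then gives $x_0=y_0$ and $\|x_0-y_0\|=0\ne 2$, a contradiction.

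\medskip

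The only genuine obstacle I foresee is the assertion in the second paragraph that the specific space $X$ coming out of Theorem~\ref{T:NoFinIsKriv} obeys part~(2) of the Clarkson inequality with the sharp constant --- this is precisely where the Orlicz/modular structure of $X$ must be exploited, and it should be established together with Theorem~\ref{T:NoFinIsKriv} itself. Once it is in hand, the equality $J(X)=J(\ell_p)$ and the rigidity argument forcing an isometric copy of $U$ or $V$ are entirely routine.
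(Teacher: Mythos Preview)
The paper does not actually supply a proof of Theorem~\ref{T:Jam}; it only remarks that one can appeal to the literature on geometric constants of Orlicz spaces (\cite{RR02}, \cite{Yan07}) and then proceeds instead to prove Theorem~\ref{T:NoFinIsKriv} directly. So there is no ``paper's proof'' to match against---your proposal is effectively an attempt to carry out what the paper leaves as a remark.

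Your overall architecture is sound and captures the intended link between Theorems~\ref{T:NoFinIsKriv} and~\ref{T:Jam}: the rigidity template (an extremal James pair produces an isometric copy of $U$ or $V$) is correct, the Krivine lower bound $J(X)\ge 2^{1/r}$ is correctly derived, and the $p\in\{1,\infty\}$ cases via strict convexity are fine.

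The gap you yourself flag is genuine, and your diagnosis of it is slightly off. You assert that the sharp Clarkson inequality~(2) should hold in $X$ and ``should be established together with Theorem~\ref{T:NoFinIsKriv} itself''. It is \emph{not} established there: the paper's computations in Sections~\ref{S:p<2} and~3 never prove a parallelogram-type inequality in the $\ell_M$ (or $\ell_{\{M_i\}}$) norm; they only combine Clarkson in the auxiliary $\ell_p$, $\ell_r$, $\ell_{p_i}$ norms with the modular equations, under the \emph{specific} hypothesis $\|x+y\|_X=\|x-y\|_X=2^{1/r}$. Whether the full inequality $\|x+y\|_X^r+\|x-y\|_X^r\le 2(\|x\|_X^r+\|y\|_X^r)$ holds in these spaces is exactly the sort of question the paper defers to \cite{RR02,Yan07}.

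That said, you do not need the full Clarkson inequality. The paper's computations adapt with one extra line to give directly what you want: for unit $x,y\in X$ with $u=\|x+y\|_X$, $v=\|x-y\|_X$, assuming $u,v\ge 2^{1/r}$ and running the same chain of inequalities (e.g.\ in the case $1<p<2$, multiply the modular equations by $u^p$, $v^p$, add, and compare with \eqref{E:FromAbove}) forces $x=y=0$. This yields $\min(u,v)<2^{1/r}$ for every pair of non-antipodal unit vectors, hence $J(X)\le 2^{1/r}$ \emph{and} non-attainment simultaneously---making the rigidity detour through Theorem~\ref{T:NoFinIsKriv} unnecessary, though it remains a pleasant alternative once you know $u=v=2^{1/r}$ is forced at an extremal pair.
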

Proving Theorem \ref{T:Jam} one can use some of the results of \cite{RR02} and \cite{Yan07}. To make our argument as elementary and self-contained we prefer to present a direct argument in terms of the metric spaces $U$ and $V$.

\section{The case $p\in(1,2)$}\label{S:p<2}

We show that in this case we can choose $X$ to be an Orlicz sequence space $\ell_M$ for a suitably chosen function $M(t)$. Let us recall the definition of an Orlicz sequence space.

\begin{definition}
Let $M:[0,\infty) \rightarrow [0,\infty)$ be a continuous, non-decreasing and convex function such that $M(0) = 0$ and $\lim_{t \rightarrow \infty} M(t) = \infty$. We define the sequence space $\ell_M$ to be the collection of sequences $x = (x_1,x_2,\dots)$ such that $\sum M(|x_n| / \rho) < \infty$ for some $\rho$ and define the norm $\|x\|_M$ to be \[ \|x\|_M = \inf \left\{ \rho > 0 : \sum_{i=1}^\infty M\left(\frac{|x_i|}\rho\right) \leq 1 \right\} \]
\end{definition}

We refer to \cite[Section 4.a]{LT77} for basic properties of Orlicz sequence spaces, however, our proof will require very little of this theory to understand.

Let $p\in(1,2)$, pick any $r\in(p,2)$ and let $M(t) = t^p +
t^r$. We show that the corresponding Orlicz space $\ell_M$ has all
of the desired properties. The fact that $\ell_M$ is isomorphic to
$\ell_p$ follows immediately from \cite[Proposition~4.a.5]{LT77}.

Assume that $\ell_M$ does not have the second property, that is,
assume that $U$ admits an isometric embedding $f$ into $\ell_M$.
Without loss of generality we may assume that $f(0)=0$. Denote
$f(e_1)$ by $x$ and $f(e_2)$ by $y$. It is easy to see that, since
$\ell_M$ is a strictly convex space \cite[Chapter VII]{RR91}, we have $f(-e_1)=-x$ and $f(-e_2)=-y$. We have $||x||=||y||=1$. So we need to get a contradiction by showing that
it is not possible that both of the vectors: $x+y$ and $x-y$ have
norm $2^{\frac1p}$ in $\ell_M$.

Since $x,y \in S_{\ell_M}$, we have $\sum_{i=1}^\infty |x_i|^p +
\sum_{i=1}^\infty |x_i|^r = 1$ and  $\sum_{i=1}^\infty |y_i|^p +
\sum_{i=1}^\infty |y_i|^r = 1$. We can write this as $\|x\|_p^p +
\|x\|_r^r = 1$ and $\|y\|_p^p + \|y\|_r^r = 1$, where by
$\|\cdot\|_p$ we denote the norm of a sequence in $\ell_p$. Adding
these equalities we get

\[2 =  \|x\|_p^p + \|y\|_p^p + \|x\|_r^r
+ \|y\|_r^r.\] By the Clarkson inequality ((2) of Theorem
\ref{T:GenClark}), we get

\begin{equation}\label{E:FromAbove} 4 \geq \|x+y\|_p^p +  \|x-y\|_p^p+
\|x+y\|_r^r + \|x-y\|_r^r \end{equation}

Denote $u = \|x+y\|_M$ and $v = \|x-y\|_M$, and note that
$$ \frac{\|x+y\|_p^p}{u^p} + \frac{\|x+y\|_r^r}{u^r} =
1$$ and
$$ \frac{\|x-y\|_p^p}{v^p} +
\frac{\|x-y\|_r^r}{v^r} = 1.$$

Suppose that $u=v= 2^{1/p}$, i.e., the embedding described above is
isometric. We get that
$$\frac{1}{2}\, \|x+y\|_p^p + \frac{1}{2^{r/p}} \|x+y\|_r^r =
1$$ and
$$\frac{1}{2}\, \|x-y\|_p^p + \frac{1}{2^{r/p}} \|x-y\|_r^r =1.$$

Doubling and adding gives
\[\|x+y\|_p^p +  \|x-y\|_p^p+
2^{1-\frac{r}p}\|x+y\|_r^r + 2^{1-\frac{r}p}\|x-y\|_r^r=4\] Since
$2^{1-r/p} < 1$, we get a contradiction with \eqref{E:FromAbove}.

\section{Case $p\in(2,\infty)$}

This case is more difficult. The reason is the following: to get a
counterexample in the above we ``perturbed'' $\ell_p$ slightly ``in the
direction of $\ell_2$''. While for $p<2$ this is achievable by
addition of $t^r$ to the Orlicz function corresponding to
$\ell_p$, this is no longer possible for $p>2$ (the space
corresponding to $t^p+t^r$ with $r<p$ is isomorphic to $\ell_r$,
and not to $\ell_p$), for this reason we have to consider more
complicated, so-called {\it modular spaces}.

Let us recall the definition of modular spaces:
\begin{definition} \label{D:modular}For each $i \in \N$, let $M_i: [0,\infty) \rightarrow [0,\infty)$ be a continuous, convex, non-decreasing and convex function such that $M(0) = 0$ and $\lim_{t \rightarrow \infty} M(t) = \infty$. Then the {\it modular sequence space} $\ell_{\{M_i\}}$ is the Banach space
of all sequences $x=\{x_i\}_{i=1}^\infty$ with $\sum_{i=1}^\infty
M_i(|x_i|/\rho)<\infty$ for some $\rho>0$, equipped with the norm

\[||x||=\inf\left\{\rho>0:\quad \sum_{i=1}^\infty
M_i \left( \frac{|x_i|}\rho \right)\le 1\right\}.\]
\end{definition}

We refer to \cite[Section~4.d]{LT77} for basic information on modular sequence spaces.
\medskip

Let $p>2$. We introduce a sequence $\{M_i\}_{i=1}^\infty$ of
functions given by $M_i(t) = t^p + t^{p_i}$ where $2 < p_i < p$ for
any $i$, and the sequence $p_i$ converges to $p$ rapidly enough,
so that the obtained modular space $\ell_{\{M_i\}}$ is isomorphic
to $\ell_p$. To see that this is achievable, we recall the following criterion from \cite[p.~167]{LT77}: if $M_i$ and $N_i$ are two collections of functions having the properties in Definition \ref{D:modular}, then $\ell_{\{M_i\}}$ and $\ell_{\{N_i\}}$ are isomorphic with the
identity map being an isomorphism if there
exist numbers $K>0$, $t_i\ge 0$, $i=1,2,\dots$, and an integer
$i_0$ so that

\begin{itemize}

\item[{\rm (a)}] $K^{-1}N_i(t)\le M_i(t)\le KN_i(t)$ for all $i\ge
i_0$ and $t\ge t_i$.

\item[{\rm (b)}] $\sum_{i=1}^\infty N_i(t_i)<\infty$.

\end{itemize}

We are going to apply this criterion with $N_i(t)=t^p$ for every
$i\in\mathbb{N}$. We choose $t_i>0$ to be a convergent to $0$
sequence for which $\sum_{i=1}^\infty t_i^p<\infty$ (so (b) is
satisfied). Finally, we let $i_0=1$ and choose the sequence
$\{p_i\}\in(2,p)$ so rapidly approaching $p$, that the condition
(a) is satisfied with $K=3$. It is easy to see that this is
possible.

We wish to use an argument similar to the argument in Section \ref{S:p<2}. To do this we will need to first prove the strict convexity of $\ell_{\{M_i\}}$. To show this, suppose that we pick two distinct elements $u,v \in S_{\ell_{\{M_i\}}}$. This means that
\begin{equation}\label{E:x,p>2}\sum_{i=1}^\infty( |u_i|^p + |u_i|^{p_i}) =
||u||_p^p+\sum_{i=1}^\infty|u_i|^{p_i} =1\end{equation} and
\begin{equation}\label{E:y,p>2}\sum_{i=1}^\infty( |v_i|^p + |v_i|^{p_i}) =
||v||_p^p+\sum_{i=1}^\infty|v_i|^{p_i} =1\end{equation}

Adding these together, and using the Clarkson inequality ((1) of
Theorem \ref{T:GenClark}), we get
\begin{equation}\label{E:Above,p>2} 2^{1-p} (\|u+v\|_p^p +
\|u-v\|_p^p) + \sum_{i=1}^\infty  2^{1-p_i}
(|u_i+v_i|^{p_i}+|u_i-v_i|^{p_i})  \leq 2\end{equation}

To show that $\|u+v\| < 2$, assume that $\|u+v\| = 2$, that is, \[ 2^{-p} \|u+v\|_p^p + \sum_{i=1}^\infty 2^{-p_i} |u_i+v_i|^{p_i} = 1. \]Multiplying by 2 and comparing with Equation \eqref{E:Above,p>2} is a contradiction, therefore $\|u+v\| < 2$.

We now continue, and show that $V$ does not isometrically embed into $\ell_{\{M_i\}}$.
Observe that the distance in $\ell_p$ between any of the vector
$\pm2^{-1/p}(e_1+e_2)$ and any of the vectors $\pm
2^{-1/p}(e_1-e_2)$ is equal to $2^{1-\frac1p}$. Assume that $V$
admits an isometric embedding $f$ into $\ell_{\{M_i\}}$. Without
loss of generality we may assume that $f(0)=0$. Set
$x=f(2^{-1/p}(e_1+e_2))$ and $y=f(2^{-1/p}(e_1-e_2))$. By the
strict convexity we get $-x=f(-2^{-1/p}(e_1+e_2))$ and
$-y=f(-2^{-1/p}(e_1-e_2))$. We have
$||x||_{\{M_i\}}=||y||_{\{M_i\}}=1$. To complete the proof it
suffices to show that
\[||x-y||_{\{M_i\}}=||x+y||_{\{M_i\}}=2^{1-\frac1p}\]
leads to a contradiction. This gives us that
\[ \frac{\|x+y\|_p^p}{2^{p-1}} +
\sum_{i=1}^\infty \frac{|x+y|^{p_i}}{2^{p_i(1-\frac1p)}} = 1
\]
and
\[ \frac{\|x-y\|_p^p}{2^{p-1}} +
\sum_{i=1}^\infty \frac{|x-y|^{p_i}}{2^{p_i(1-\frac1p)}} = 1
\]
Adding and rearranging we get

\begin{equation}\label{E:Equal,p>2} 2^{1-p} (\|x+y\|_p^p + \|x-y\|_p^p) + \sum_{i=1}^\infty
2^{p_i(\frac1p-1)} (|x_i+y_i|^{p_i}+|x_i-y_i|^{p_i}) =
2\end{equation}

Since $2<p_i<p$, and therefore $1-p_i>{p_i(\frac1p-1)}$, the
equations \eqref{E:Above,p>2} (which was valid for any elements of $S_{\ell_{\{M_i\}}}$, so we are free to set $u = x$ and $v = y$) and \eqref{E:Equal,p>2} contradict
each other.

\section{Acknowledgement}

The second-named author gratefully acknowledges the support by
National Science Foundation DMS-1700176.

\end{large}

\begin{small}

\renewcommand{\refname}{\section{References}}

\end{small}

\textsc{(J.K.) Department of Pure Maths and Mathematical
Statistics, University of Cambridge, Cambridge, CB3 0WB, UK}\par

\textit{E-mail address}: \texttt{jk511@cam.ac.uk}
\bigskip

\textsc{(M.O.) Department of Mathematics and Computer Science, St.
John's University, 8000 Utopia Parkway, Queens, NY 11439, USA}
\par

\textit{E-mail address}: \texttt{ostrovsm@stjohns.edu} \par
  \smallskip

\end{document}